\newtheorem{propo}{{\bf Proposition}}[section]
\newtheorem{coro}[propo]{{\bf Corollary}}
\newtheorem{lemma}[propo]{{\bf Lemma}} \newtheorem{theor}[propo]{{\bf
Theorem}} \newtheorem{ex}{{\sc Example}}[section]
\newenvironment{proof}{{\bf Proof.}}{$\Box$}
\def\N{{\mathbb N}}
\begin{document}

\vspace*{1.0in}

\begin{center} ON THE NILRADICAL OF A LEIBNIZ ALGEBRA
\end{center}
\bigskip

\begin{center} DAVID A. TOWERS 
\end{center}
\bigskip

\begin{center} Department of Mathematics and Statistics

Lancaster University

Lancaster LA1 4YF

England

d.towers@lancaster.ac.uk 
\end{center}
\bigskip

\begin{abstract} The purpose of this short note is to correct an error which appears in the literature concerning Leibniz algebras $L$: namely, that $N(L/I)=N(L)/I$ where $N(L)$ is the nilradical of $L$ and $I$ is the Leibniz kernel.
\par 
\noindent {\em Mathematics Subject Classification 2000}: 17B05, 17B20, 17B30, 17B50.
\par
\noindent {\em Key Words and Phrases}: Leibniz algebras, nilradical, radical, Frattini ideal, Leibniz kernel. 
\end{abstract}

\section{Introduction}
\medskip

An algebra $L$ over a field $F$ is called a {\em Leibniz algebra} if, for every $x,y,z \in L$, we have
\[  [x,[y,z]]=[[x,y],z]-[[x,z],y]
\]
In other word,s the right multiplication operator $R_x : L \rightarrow L : y\mapsto [y,x]$ is a derivation of $L$. As a result, such algebras are sometimes called {\it right} Leibniz algebras, and there is a corresponding notion of {\it left} Leibniz algebra. Clearly, the opposite of a right Leibniz algebra is a left Leibniz algebra so, for our purposes, it does not matter which is used. Every Lie algebra is a Leibniz algebra and every Leibniz algebra satisfying $[x,x]=0$ for every element is a Lie algebra.
\par
 
Put $I=span\{x^2:x\in L\}$. Then $I$ is an ideal of $L$ and $L/I$ is a Lie algebra called the {\em liesation} of $L$. We define the following series:
\[ L^1=L,L^{k+1}=[L^k,L] \hbox{ and } L^{(1)}=L,L^{(k+1)}=[L^{(k)},L^{(k)}] \hbox{ for all } k=2,3, \ldots
\]
Then $L$ is {\em nilpotent} (resp. {\em solvable}) if $L^n=0$ (resp. $ L^{(n)}=0$) for some $n \in \N$. The {\em nilradical}, $N(L)$, (resp. {\em radical}, $R(L)$) is the largest nilpotent (resp. solvable) ideal of $L$.
\par

In \cite{gorb} it is claimed that $R(L/I)=R(L)/I$ and $N(L/I)=N(L)/I$. However, whereas the former is clearly true, the latter is false in general. The claim appears in the proof of \cite[Proposition 4]{gorb}, which has two corollaries. Although this paper appears only to have been published on arxiv it has been quite widely cited. Moreover, the results following from this assertion have been quoted in \cite[Proposition 4.4]{dms}, \cite[Propositions 3.3, 3.4 and Corollaries 3.5,3.6]{rak} and referenced to \cite{gorb}, and the same incorrect assertion is used to prove two results in a recent book (\cite[Proposition 2.1 and 2.2]{aor}). The results which Gorbatsevich uses this assertion to prove are true, as are \cite[Proposition 2.1 and 2.2]{aor}. The purpose of this short note is to give a correct characterisation of $N(L/I)$ and to provide or reference proofs for the results in which the incorrect assertion is used.
\par

Throughout, $L$ will denote a finite-dimensional (right) Leibniz algebra over a field $F$. The {\em Frattini ideal} of $L$ is the largest ideal of $L$ contained in every maximal subalgebra of $L$. We will denote algebra direct sums by $\oplus$ and vector space direct sums by $\dot{+}$. 

\section{The nilradical}
The literature concerning Leibniz algebras is quite diverse and a number of results have been duplicated, so the survey articles \cite{feld}, \cite{gorb} and the new book \cite{aor} are useful. The nilradical of a Leibniz algebra is a well-defined object.

\begin{theor} The sum of two nilpotent ideals of a Leibniz algebra is nilpotent.
\end{theor}
\begin{proof} See \cite[Theorem 5.14]{feld} or \cite[Lemma 1.5]{schunck}. 
\end{proof}

\begin{coro} Any Leibniz algebra has a maximal nilpotent ideal containing all nilpotent ideals of $L$.
\end{coro}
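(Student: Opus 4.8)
The plan is to deduce this directly from Theorem 2.1 together with the standing finite-dimensionality hypothesis. Since the zero ideal is nilpotent, the collection of nilpotent ideals of $L$ is non-empty, and since $L$ is finite-dimensional we may choose among them a nilpotent ideal $N$ of maximal dimension.

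First I would take an arbitrary nilpotent ideal $M$ of $L$ and form $N+M$. This is again an ideal of $L$, and by Theorem 2.1 it is nilpotent. Hence $\dim(N+M)\le\dim N$ by the maximality of $\dim N$; since $N\subseteq N+M$, this forces $N+M=N$, i.e. $M\subseteq N$. As $M$ was arbitrary, $N$ contains every nilpotent ideal of $L$, and in particular $N$ is the unique largest nilpotent ideal, namely $N(L)$.

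There is no real obstacle here: the only point requiring care is that the argument genuinely uses finite-dimensionality (to guarantee a nilpotent ideal of maximal dimension exists), which is part of the blanket assumption on $L$, and that the sum of two nilpotent ideals being nilpotent is exactly the content of the preceding theorem. One could alternatively phrase it via an ascending chain of nilpotent ideals stabilising, but the maximal-dimension argument is cleanest.
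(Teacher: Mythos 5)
Your argument is correct and complete: choosing a nilpotent ideal $N$ of maximal dimension (possible by finite-dimensionality) and observing that $N+M$ is a nilpotent ideal containing $N$ for any nilpotent ideal $M$, by Theorem 2.1, does force $M\subseteq N$. The paper itself does not write this argument out --- it simply points to \cite[Corollary 4]{bosko} for a valid proof (while warning that the proofs in \cite{gorb} and \cite{aor} are flawed) --- so your proposal supplies exactly the standard deduction from the preceding theorem that the paper leaves implicit. The only stylistic remark is that you should be explicit that the sum of two ideals of a Leibniz algebra is again an ideal, which is immediate but is the one structural fact used beyond Theorem 2.1.
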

\begin{proof} A valid proof of this can also be found as  \cite[Corollary 4]{bosko}. Note that the proof given in \cite[Proposition 1]{gorb} and \cite[Proposition 2.1]{aor} is incorrect.
\end{proof}
\medskip

Next, we show that $N(L/I)\neq N(L)/I$ in general.

\begin{ex} Let $ L = Fx + Fx^2$ where $[x^2,x] = x^2$ is the two-dimensional solvable cyclic Leibniz algebra, we have $I = Fx^2 = N(L)$ and $L/I$ is the nilradical of $L/I$. 
\end{ex}

The fact that $N(L)=I$ is not significant in the above example as the following class of algebras shows.
\begin{ex} Let $L=Fx_1+\ldots +Fx_r+Fx_{r+1}+\ldots +Fx_n+Fy$ where $[x_i,y]=x_i$ for $r+1\leq i\leq n$ and all other products are zero. Then $I=Fx_{r+1}+\ldots +Fx_n$, $N(L)=Fx_1+\ldots +Fx_n$ and $N(L/I)=L/I$.
\end{ex}

Nor is the fact that $N(L/I)=L/I$ significant in the above examples as can be seen by taking the direct sum of them with a simple Lie algebra.

Next we look for more information on $N(L/I)$. First note the following.

\begin{lemma}\label{1} If $I\subseteq \phi(L)$ then $N(L/I)=N(L)/I$.
\end{lemma}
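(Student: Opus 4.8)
The plan is to prove the two inclusions $N(L)/I\subseteq N(L/I)$ and $N(L/I)\subseteq N(L)/I$ separately. First I would record an elementary fact used throughout: putting $z=y$ in the Leibniz identity gives $[x,y^2]=[x,[y,y]]=0$ for all $x,y\in L$, so $[L,I]=0$; in particular $[I,I]=0$, so $I$ is an abelian, hence nilpotent, ideal of $L$. Consequently $I\subseteq N(L)$ and the quotient $N(L)/I$ is meaningful. Since $N(L)$ is a nilpotent ideal of $L$ containing $I$, its image $N(L)/I$ in the Lie algebra $L/I$ is a nilpotent ideal, and therefore $N(L)/I\subseteq N(L/I)$.

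For the reverse inclusion, let $M$ be the unique ideal of $L$ with $I\subseteq M$ and $M/I=N(L/I)$; thus $M/I$ is nilpotent. Because $I\subseteq\phi(L)$ we have $I\subseteq M\cap\phi(L)$, so $M/(M\cap\phi(L))$ is a homomorphic image of the nilpotent algebra $M/I$ and is therefore itself nilpotent. At this point I would invoke the Frattini-type result for Leibniz algebras: if $A$ is an ideal of $L$ and $A/(A\cap\phi(L))$ is nilpotent, then $A$ is nilpotent. Applying this with $A=M$ shows that $M$ is a nilpotent ideal of $L$, so $M\subseteq N(L)$, whence $N(L/I)=M/I\subseteq N(L)/I$. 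Combining the two inclusions yields $N(L/I)=N(L)/I$.

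The step that does the real work is the Frattini-type lemma used in the second paragraph; this is the exact Leibniz-algebra analogue of the classical statement for Lie algebras, and I would either cite its Leibniz version from the existing literature on the Frattini ideal of a Leibniz algebra or, failing a direct reference, prove it, using that $\phi(L)$ is a nilpotent ideal and that its elements behave as non-generators with respect to the maximal subalgebras of $L$. Everything else — the correspondence between ideals of $L/I$ and ideals of $L$ containing $I$, stability of nilpotence under homomorphic images, and the fact that $I$ is abelian — is routine bookkeeping.
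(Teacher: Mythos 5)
Your proof is correct and follows essentially the same route as the paper: the inclusion $N(L)/I\subseteq N(L/I)$ is immediate, and the reverse inclusion rests on the Frattini-type result that an ideal $M$ with $M/(M\cap\phi(L))$ nilpotent is itself nilpotent, which is exactly the content of the theorem of Barnes that the paper cites at this point. Your additional observations ($[L,I]=0$, hence $I$ abelian and $I\subseteq N(L)$) are correct and merely make explicit what the paper leaves tacit.
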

\begin{proof} Clearly $N(L)/I\subseteq N(L/I)=K/I,$ say. But $K$ is nilpotent, by \cite[Theorem 5.5]{barnes}.
\end{proof}

\begin{lemma}\label{2} If $I \not \subseteq \phi(L)$ then there is a subalgebra $B$ of $L$ such that $L=I+B$ and $I\cap B\subseteq \phi(B)$.
\end{lemma}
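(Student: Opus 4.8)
The plan is the standard ``supplement modulo the Frattini subalgebra'' argument. First I would exploit the hypothesis to produce a proper subalgebra that together with $I$ spans $L$. Since $\phi(L)$ is by definition the largest ideal of $L$ contained in every maximal subalgebra, and $I$ is itself an ideal, the assumption $I\not\subseteq\phi(L)$ forces $I$ to fail to lie in at least one maximal subalgebra: there is a maximal subalgebra $M$ of $L$ with $I\not\subseteq M$. As $I$ is an ideal, $I+M$ is a subalgebra of $L$ properly containing $M$, so maximality gives $L=I+M$. Hence the family $\mathcal{S}$ of subalgebras $B$ with $L=I+B$ is nonempty, and it contains a proper subalgebra of $L$.

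Since $L$ is finite-dimensional I would then pick $B\in\mathcal{S}$ of least dimension and claim it is the required subalgebra. The relation $L=I+B$ holds by construction, so only $I\cap B\subseteq\phi(B)$ remains. Here note that $I\cap B$ is an ideal of $B$ (because $I$ is an ideal of $L$), so if $I\cap B\not\subseteq\phi(B)$ the very argument of the previous paragraph, now carried out inside $B$, produces a maximal subalgebra $C$ of $B$ with $B=(I\cap B)+C$. Substituting,
\[ L=I+B=I+(I\cap B)+C=I+C , \]
so $C\in\mathcal{S}$; but $C$ is a proper subalgebra of $B$, contradicting the minimality of $\dim B$. Therefore $I\cap B\subseteq\phi(B)$, completing the proof.

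I do not anticipate a serious obstacle; the argument is short and essentially combinatorial in the subalgebra lattice. The two points that deserve an explicit word are: (i) the elementary fact that an ideal contained in \emph{every} maximal subalgebra must already be contained in $\phi(L)$ (this is what licenses the passage from $I\not\subseteq\phi(L)$ to the existence of $M$, and it also gives the analogous step inside $B$); and (ii) verifying that $I\cap B$ is an ideal of $B$, so that the Frattini step can be repeated one level down. It is also worth recording that the hypothesis $I\not\subseteq\phi(L)$ is used precisely to guarantee that the minimal $B$ is a \emph{proper} subalgebra --- if $I\subseteq\phi(L)$ the only member of $\mathcal{S}$ is $L$ itself, for which $I\cap B=I\not\subseteq\phi(B)$, which is exactly the situation already handled by Lemma~\ref{1}.
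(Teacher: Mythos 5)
Your argument is correct. The paper itself gives no proof here: it simply cites Lemma 7.1 of Towers, \emph{A Frattini theory for algebras} (reference \cite{frat}), and your minimal-supplement argument is precisely the standard proof of that lemma --- take $B$ of least dimension with $L=I+B$ (nonempty family because $I\not\subseteq\phi(L)$ yields a maximal subalgebra $M$ not containing the ideal $I$, whence $L=I+M$), and use the Frattini property inside $B$ to push any failure of $I\cap B\subseteq\phi(B)$ down to a smaller supplement. Both of the points you single out, that an ideal lying in every maximal subalgebra lies in $\phi(L)$ and that $I\cap B$ is an ideal of $B$, are exactly the ones that need saying. One small correction to your closing aside: if $I\subseteq\phi(L)$ then indeed $B=L$ is the only supplement, but in that case $I\cap B=I\subseteq\phi(L)=\phi(B)$, so the conclusion of the lemma holds trivially rather than failing; the hypothesis $I\not\subseteq\phi(L)$ is what makes the lemma give nontrivial information (a proper $B$), not what makes it true. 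This slip is outside the proof proper and does not affect its validity.
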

\begin{proof} This is \cite[Lemma 7.1]{frat}.
\end{proof}
\medskip

Then, using the same notation as in Lemma \ref{2}, we have the following.

\begin{theor} The nilradical of $L/I$ is $N(L/I)=(I+N(B))/I$ and this is the same as $N(L)/I$ if and only if $R_n|_I$ is nilpotent for all $n\in N(B)$.
\end{theor}
\begin{proof} We have
\[ N\left(\frac{L}{I}\right)\cong N\left(\frac{B}{I\cap B}\right)=\frac{N(B)}{I\cap B}=\frac{N(B)}{I\cap N(B)}\cong \frac{I+N(B)}{I}.
\] But $(I+N(B)/I\subseteq N(L/I)$, so equality results.
\par

Now, $I+N(B)=N(L)$ if and only if $N(B)$ acts nilpotently on the right on $I$. 
\end{proof}

\section{Some results where $N(L/I)=N(L)/I$ were used}
First we have the following analogue of a well-known result for Lie algebras. The only references to this in the literature of which we are aware are  \cite[Proposition 4]{gorb} and \cite[Proposition 2.2]{aor}. However, the proof in each case is incorrect, though the result is true.

\begin{propo}\label{3} Let $L$ be a Leibniz algebra over a field of characteristic zero, $R$ be its radical and $N$ its nilradical. Then $[L,R]\subseteq N$.
\end{propo}
\begin{proof} Since $N(L/\phi(L))=N(L)/\phi(L)$ we can assume that $L$ is $\phi$-free. Then $L=$Asoc$(L)\dot{+} V$, where $V=S\oplus Z(V)$ and $S$ is a semisimple Lie algebra, by \cite[Corollary 2.9]{stit}. Also $R=$Asoc$(L)+Z(V)$ and $N=$Asoc$(L)$, by \cite[Theorem 2.4]{stit}, from which the result is clear.
\end{proof}
\medskip

The above result has the following corollary which appears in several places in the literature. It occurs as \cite[Corollaries 5 and 6]{gorb} and \cite[Corollaries 2.2 and 2.3]{aor} but the proofs are incorrect. It appears with correct proofs as \cite[Corollary 6.8]{feld}, \cite[Corollary 3]{pak}, \cite[Theorem 4]{ao}, \cite[Theorem 2]{ns} and \cite[Theorem 2.6]{barnes}.

\begin{coro}(\cite[Corollary 5]{gorb}) With same notation as in Proposition \ref{3}, $[R,R]\subseteq N$; in particular, $[R,R]$ is nilpotent; in fact, $L$ is solvable if and only if $[L,L]$ is nilpotent. 
\end{coro}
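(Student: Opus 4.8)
The plan is to obtain the corollary as a purely formal consequence of Proposition~\ref{3}, so that essentially no new work is required.

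First, since $R$ is an ideal of $L$, in particular $R\subseteq L$, Proposition~\ref{3} gives $[R,R]\subseteq[L,R]\subseteq N$ at once. To see that $[R,R]$ is nilpotent I would note that $[R,R]$ is a subalgebra of $L$ contained in the nilpotent ideal $N$, and that any subalgebra of a nilpotent Leibniz algebra is itself nilpotent (if $N^{k}=0$ then $S^{k}\subseteq N^{k}=0$ for every subalgebra $S$). Hence $[R,R]$ is nilpotent.

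For the stated equivalence, the forward direction is immediate: if $L$ is solvable then $R=L$, so $[L,L]=[R,R]$, which is nilpotent by the previous step (here we use that we are in characteristic zero, as in Proposition~\ref{3}). Conversely, if $[L,L]$ is nilpotent then it is in particular solvable, while $L/[L,L]$ is abelian and hence solvable; since an extension of a solvable Leibniz algebra by a solvable Leibniz algebra is solvable (the derived series of $L$ reaches $[L,L]$ after one step and then terminates), it follows that $L$ is solvable.

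I do not anticipate a genuine obstacle. The only facts used beyond Proposition~\ref{3} are that subalgebras of nilpotent Leibniz algebras are nilpotent and that solvable-by-solvable Leibniz algebras are solvable, both of which are routine and carry over verbatim from the Lie case; all of the substance of the corollary is already contained in Proposition~\ref{3}.
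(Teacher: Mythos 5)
Your argument is correct and is exactly the deduction the paper intends: the paper states this as a corollary of Proposition \ref{3} without writing out a proof (it defers to the cited references), and your chain $[R,R]\subseteq[L,R]\subseteq N$, nilpotency of subalgebras of nilpotent algebras, and the standard solvable-by-abelian argument for the final equivalence is the natural way to fill that in. No gaps.
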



\begin{thebibliography}{1}

\bibitem{ns} {\sc S. Albeverio, Sh. A. Ayupov and B.A. Omirov}, `On nilpotent and simple Leibniz algebras', {\em Comm. Alg.} {\bf 33 (1)} (2005), 159-172.

\bibitem{ao} {\sc Sh.A. Ayupov ,B.A.  Omirov}, `On Leibniz algebras', {\em Algebra and
Operators Theory, Proceeding of the Colloquium in Tashkent},
(1997), Kluwer Academic Publishers, (1998), 1–13.

\bibitem{aor} {\sc S. Ayupov, B. Omirov and I. Rakhimov}, `Leibniz Algebras, Structure and Classification', CRC Press, Taylor and Francis Group (2020).

\bibitem{barnes} {\sc D.W. Barnes}, `Some theorems on Leibniz algebras', {\em Comm. Algebra} {\bf 39(7)} (2011), 2463--2472.

\bibitem{schunck} {\sc D.W. Barnes}, 'Schunck classes of soluble Leibniz algebras', {\em Comm. Alg.} {\bf 41} (2013), 4046-4065.

\bibitem{stit} {\sc C. Batten, L. Bosko-Dunbar, A. Hedges, J.T. Hird, K. Stagg and E. Stitzinger}, `A Frattini theory for Leibniz algebras', {\em Comm. Alg.} {\bf 41(4)} (2013), 1547--1557.

\bibitem{bosko}, {\sc L. Bosko, A.  Hedges, J.T. Hird, N. Schwartz, K.  Stagg}, `Jacobson's refinement of Engel's theorem for Leibniz algebras', {\em Involve} {\bf 4 (3)} (2011), 293–296. 

\bibitem{dms} {\sc I. Demir, K.C. Misra, E. Stitzinger}, `On some structures of Leibniz algebras', in {\em Recent Advancesin Representation Theory, Quantum Groups, Algebraic Geometry, and Related Topics, ContemporaryMathematics}, {\bf 623} Amer. Math. Soc., Providence, RI, (2014), 41-54.

\bibitem{feld} {\sc J. Feldvoss}, `Leibniz algebras as nonassociative algebras', in:Nonassociative Mathematicsand Its Applications, Denver, CO, 2017 (eds. P. Vojtˇechovsky, M. R. Bremner, J. S. Carter, A. B. Evans, J. Huerta, M. K. Kinyon, G. E. Moorhouse, J. D. H. Smith), Contemp. Math.,vol.721, Amer. Math. Soc., Providence, RI, 2019, pp. 115–149. (This paper can also be obtained from the arXiv via arXiv:1802.07219.)

\bibitem{gorb} {\sc V.V. Gorbatsevich}, `On some basic properties of Leibniz algebras',arxiv:1302.3345v2(2013).

\bibitem{pak} {\sc A. Patsourakos}, `On nilpotent properties of Leibniz algebras', {\em Comm. Alg.} {\bf 35 (12)} (2007), 3828-3834.

\bibitem{rak} {\sc I. S. Rakhimov}, `On classification problem of Loday algebras', {\em Contemporary Mathematics} {\bf 672} (2016), 225-244.

\bibitem{frat} {\sc D.A. Towers}, `A Frattini theory for algebras', {\em
Proc. London Math. Soc.} (3) {\bf 27} (1973), 440--462.
\end{thebibliography}
\end{document}